\newtheorem{theorem}{Theorem}[section]
\newtheorem{corollary}[theorem]{Corollary}
\newtheorem{lemma}[theorem]{Lemma}
\newtheorem{proposition}[theorem]{Proposition}
\theoremstyle{definition}
\newtheorem{definition}[theorem]{Definition}
\newtheorem{example}[theorem]{Example}
\newtheorem{remark}[theorem]{Remark}
\numberwithin{equation}{section}
\title[A Dual Representation Theorem on the Conditional Orlicz Space ]{a dual representation theorem on the conditional Orlicz space generated from a random normed module}
\author[X. Zhang]{Xia Zhang}
\address[X. Zhang]{School of Mathematical Sciences, TianGong University, Tianjin 300387, P.R.China}
\email{{\tt zhangxia@tiangong.edu.cn}}
\author[K. Qian]{Ke Qian}
\address[K. Qian]{School of Mathematical Sciences, TianGong University, Tianjin 300387, P.R.China}
\email{\tt qianke2230111367@163.com}
\author[M. Liu]{Ming Liu$^{*}$}
\address[M. Liu]{School of Mathematical Sciences, TianGong University, Tianjin 300387, P.R.China}
\email{\tt liuming@tiangong.edu.cn}
\thanks{$^*$Corresponding author.}
\keywords{Random normed module, conditional Orlicz space, dual representation theorem}
\subjclass[2010]{46H25, 46E30, 46B20}
\begin{document}

\begin{abstract}
In this paper, we first introduce the notion of a random Orlicz function, and further present  the conditional Orlicz space generated from a random normed module. Second, we prove the denseness of the Orlicz heart of a random normed module $E$ in $E$ with respect to the $(\varepsilon, \lambda)$-topology. Finally, based on the above work, we establish a dual representation theorem on the conditional Orlicz space generated from a random normed module, which extends and improves some known results.
\end{abstract}

\maketitle


\section{Introduction}
A random normed module (briefly, an $RN$ module) is a random generalization of a classical normed space. The original definition of $RN$ modules was first introduced in [14], with a more detailed and refined version presented in [13]. The notion of $RN$ modules was initially introduced to address the limitations of normed spaces in dealing with randomness and uncertainty. The main difficulty in the study of $RN$ modules is that the development and complexity of $RN$ modules make the traditional theory of conjugate spaces less applicable. This needs new approaches and theories to handle the above difficulty. The significant breakthrough in the field of $RN$ modules was made by Guo, attributed to his establishment of the Hahn-Banach theorem for an almost everywhere bounded random linear functional [12]. Furthermore, the representation theory of random conjugate spaces and characterizations of random reflexivity within the framework of random conjugate spaces have yielded many profound results. For example, Guo [15] advanced the development of this field by proving the representation theorems for the dual of Lebesgue-Bochner function spaces. Lebesgue-Bochner function spaces generalize Lebesgue spaces by allowing functions to take values in a Banach space rather than just real or complex numbers. The representation theorems of the dual space are essential for understanding the structure of these function spaces. Subsequently, Guo and Li [16] established the James theorem in complete $RN$ modules, extending a fundamental result in Banach space theory. The James theorem characterizes reflexive Banach spaces, and Guo and Li's work generalizes this theorem to $RN$ modules, which is significant for dealing with complex systems involving randomness.
Recently, we were delight to find out about some excellent work with respect to the dual representation theorem of Gao et al. For example, Gao and Xanthos [8] deeply studied the C-property and $w^*$-representations of risk measures, and Gao, Leung and Xanthos [7] provided that a proper convex functional satisfying the Fatou property on a general function space must not admit a tractable dual representation for the class of Orlicz spaces. Wu, Long and Zeng [3] established a representation theorem which identify the dual of the Orlicz heart of an $RN$ module with the Orlicz space generated from the random conjugate space.
Let $E$ be an $RN$ module with base $(\Omega, \mathcal{E}, P)$, $\mathcal{F}$ a sub-$\sigma$-algebra of $\mathcal{E}$, and $1 \leq p< +\infty$ and $1 < q\leq +\infty$ a pair of the $H\ddot{o}lder$ conjugate numbers, then the space $L_{\mathcal{F}}^{p}(E)$ generated from $E$ is defined by
$$L_{\mathcal{F}}^{p}(E)=\left\{x \in E:|||x|||_{p} \in L_{+}^{0}(\mathcal{F})\right\},$$
which can be made into an $RN$ module in a natural way. Guo $[12]$ proved a basic representation theorem which says that the random conjugate space of $L_{\mathcal{F}}^{p}(E)$ is isometrically isomorphic to $L_{\mathcal{F}}^q\left(E^*\right)$. Wu and Zhao [4] gave a proof of Stricker's lemma based on a result in the theory of $RN$ modules.

The theory of risk measures has always been an active and fruitful research area in the finance. Gao, Leung, Munari and Xanthos [6] studied all kinds of results for quasiconvex, law-invariant functionals on a general Orlicz space. The above results have wide applications in the theory of risk measures. Chen, Gao, Leung and Li [10] investigated automatic Fatou property of law-invariant risk measures on a rearrangement-invariant function space. Moreover, the theory of $RN$ modules has also been applied to conditional risk measures [11,17]. The $RN$ module $L_{\mathcal{F}}^{p}(\mathcal{E})$ could serve as the most general model space for the unified study of various types of conditional risk measures and the scholars established an extension theorem for $L^p$-conditional risk measures. We refer to $[1, 2, 6, 9]$ about Orlicz spaces and risk measures. Motivated by Gao and Wu et al., this paper is focused on introducing and studying the conditional Orlicz space generated from an $RN$ module.

Throughout this paper, we first introduce the notion of a random Orlicz function, and further present the conditional Orlicz space generated from an $RN$ module. Second, we prove the denseness of the Orlicz heart of an $RN$ module $E$ in $E$ with respect to the $(\varepsilon, \lambda)$-topology, which is important in the proof of main theorems. Finally, we establish a dual representation theorem on the conditional Orlicz space generated from an $RN$ module, which says that the dual of the conditional Orlicz heart generated from an $RN$ module is the conditional Orlicz space generated from the random conjugate space. These results extend and improve some known results.

\section{Terminology and Notation}
Let $(\Omega, \mathcal{F}, P)$ be a probability space, $K$ the scalar field of real numbers $\mathbb{R}$ or complex numbers $\mathbb{C}$, $\bar{L}^0(\mathcal{F}, \mathbb{R})$ the set of equivalence classes of $\mathcal{F}$-measurable extended real-valued random variables on $\Omega$, and $L^0(\mathcal{F}, K)$ the algebra of equivalence classes of $\mathcal{F}$-measurable $K$-valued random variables on $\Omega$. $L^0(\mathcal{F}, \mathbb{R})$ and $\bar{L}^0(\mathcal{F}, \mathbb{R})$ are written as $L^0(\mathcal{F})$ and $\bar{L}^0(\mathcal{F})$, respectively.
Specially,
$$L_{+}^0(\mathcal{F})=\left\{\zeta \in L^0(\mathcal{F}) \mid \zeta \geq 0\right\}, L_{++}^0(\mathcal{F})=\left\{\zeta \in L^0(\mathcal{F}) \mid \zeta > 0\right\}.$$

Besides, $\bar{L}^0(\mathcal{F})$ is partially ordered by $\xi \leq \eta$ iff $\xi^0(\omega) \leq \eta^0(\omega)$ for $P$-almost all $\omega \in \Omega$, where $\xi^0$ and $\eta^0$ are arbitrarily chosen representatives of $\xi$ and $\eta$, respectively.

According to [5], $\left(\bar{L}^0(\mathcal{F}), \leq\right)$ is a complete lattice and $\left(L^0(\mathcal{F}), \leq\right)$ is a conditionally complete lattice. For every subset $H$ in $\bar{L}^0(\mathcal{F}), \vee H$ represents the supremum of $H$. If $H$ is upward directed, namely there exists $z \in H$ for any $x, y \in H$ such that $x \leq z$ and $y \leq z$, then there exists a sequence $\left\{z_n: n \in \mathbb{N}\right\}$ in $H$ such that $\left\{z_n: n \in \mathbb{N}\right\}$ converges to $\vee H$ in a nondecreasing way. In addition, if $I_A$ denotes the characteristic function of $A$ for any $A \in \mathcal{F}$, then $\tilde{I}_A$ is the equivalence class of $I_A$.

For any $\zeta \in L^0(\mathcal{F}),|\zeta|$ denotes the equivalence class of $\left|\zeta^0\right|: \Omega \rightarrow[0, \infty)$ defined by $$\left|\zeta^0\right|(\omega)=\left|\zeta^0(\omega)\right|$$ for an arbitrarily chosen representative $\zeta^0$ of $\zeta$.

For any $x \in L^0(\mathcal{F}, K)$, ${sgn}(x)$ denotes the equivalence class of ${sgn}\left(x^0\right)$ defined by
$$
{sgn}\left(x^0\right)(\omega)=\left\{\begin{array}{ll}
\frac{\left|x^0(\omega)\right|}{x^0(\omega)}, & x^0(\omega) \neq 0, \\
 0, & x^0(\omega) = 0,
\end{array}\right.
$$
where $x^0$ is an arbitrarily chosen representative of $x$.

\begin{definition}[\cite{13}]
An ordered pair $(E,\|\cdot\|)$ is called a random normed module (briefly, an $RN$ module) over $K$ with base $(\Omega, \mathcal{F}, P)$ if $E$ is a left module over $L^0(\mathcal{F}, K)$, and the mapping $\|\cdot\|: E \rightarrow L_{+}^0(\mathcal{F})$ satisfies:

($1$)\, $\|x\|=0$ iff $x=\theta$ ( the null element of $E$ );

($2$)\, $\|\zeta x\|=|\zeta|\|x\|$ for all $\zeta \in L^0(\mathcal{F}, K)$ and $x \in E$;

($3$)\, $\left\|x+y\right\| \leq\left\|x\right\|+\left\|y\right\|$ for all $x, y \in E$.
\end{definition}

Suppose that $(E,\|\cdot\|)$ is an $RN$ module, $E$ is always endowed with the $(\varepsilon, \lambda)$-topology. Then a sequence $\left\{x_n: n \in \mathbb{N}\right\}$ in $E$ converges in the $(\varepsilon, \lambda)$-topology to $x\in E$ iff the sequence $\left\{\left\|x_n-x\right\|: n \in \mathbb{N}\right\}$ in $L_{+}^0(\mathcal{F})$ converges in probability to 0. Specially, $\left(L^0(\mathcal{F}, K),|\cdot|\right)$ is an $RN$ module.

In addition, $E^*$ denotes the $L^0(\mathcal{F}, K)$-module of all continuous module homomorphisms $f$ from $(E, \|\cdot\|)$ to $\left(L^0(\mathcal{F}, K), |\cdot|\right)$. It follows from [14] that, if $f: E \rightarrow L^0(\mathcal{F}, K)$ is a linear mapping, then $f \in E^*$ iff $f$ is almost surely bounded, which means that for some $\zeta \in L_{+}^0(\mathcal{F}), |f(x)| \leq \zeta\|x\|$ holds for all $x \in E$. Define $$\|f\|^*=\vee\{|f(x)|: x \in E, \|x\| \leq 1\}$$ for any $f \in E^*$, then $\left(E^*, \|\cdot\|^*\right)$ is an $RN$ module, called the random conjugate space of $(E, \|\cdot\|)$.

\begin{definition}[\cite{1}]
A function $\phi:[0, \infty) \rightarrow[0, \infty]$ is called an Orlicz function if it satisfies:

$(1)~ \phi(0)=0$;

$(2)~ \phi$ is left-continuous;

$(3)~ \phi$ is increasing;

$(4)~ \phi$ is convex;

$(5)~ \phi$ is nontrivial.
\end{definition}

We observe that $\phi$ is continuous except possibly at a single point, where it jumps to $\infty$. Define the conjugate of $\phi$ by
$$\psi(s):=\sup _{t \geq 0}\{t s-\phi(t)\}$$
for any $s \geq 0$, then $\psi$ is also an Orlicz function. It is easy to check that the conjugate of $\psi$ is $\phi$.

In Definition $2.3$ below, we first introduce the notion of a random Orlicz function, which is a natural generalization of an Orlicz function.
\begin{definition}
A function $\Phi:L_{+}^{0}(\mathcal{F}) \rightarrow \bar{L}_{+}^{0}(\mathcal{F})$ is called a random Orlicz function if it satisfies:

$(1)~ \Phi(0)=0$;

$(2)~ \Phi$ is left-continuous: $\forall \varepsilon>0, \lim\limits_{t\uparrow t_0} P\{\omega \in \Omega:|\Phi(t)-\Phi(t_0)|(\omega)>\varepsilon\}=0$;

$(3)~ \Phi$ is increasing: $\Phi(t_1) \geq \Phi(t_2)$ for all $t_1, t_2 \in L_{+}^{0}(\mathcal{F})$ such that $t_1 \geq t_2$;

$(4)~ \Phi$ is $L^{0}(\mathcal{F})$-convex: $\Phi(\zeta t_1+(1-\zeta)t_2)\leq \zeta\Phi(t_1)+(1-\zeta)\Phi(t_2)$ for any $\zeta \in L_{+}^0(\mathcal{F})$ with $0 \leqslant \zeta \leqslant 1$ and $x_1, x_2 \in L_{+}^{0}(\mathcal{F})$;

$(5)~ \Phi(t)\in L_{++}^{0}(\mathcal{F})$ for some $t \in L_{++}^{0}(\mathcal{F})$.
\end{definition}

Define the random conjugate of $\Phi$ by
$$\Psi(s):=\vee \{t s-\Phi(t):t \in L_{+}^{0}(\mathcal{F})\}$$
for any $s \in L_{+}^{0}(\mathcal{F})$, then $\Psi$ is also a random Orlicz function. It is easy to check that the random conjugate of $\Psi$ is $\Phi$.

In addition, $\Phi$ is said to satisfy the $\triangle_2$-condition, defined by $\Phi \in \triangle_2$, if there is $\zeta \in L_{++}^{0}(\mathcal{F})$ such that $\Phi(2t) \leq \zeta \Phi(t)$ for every $t \in L_{+}^{0}(\mathcal{F})$.
\section{The Conditional Orlicz Space Generated from a Random Normed Module}
In this paper, $(\Omega, \mathcal{E}, P)$ denotes a probability space, $\mathcal{F}$ a sub-$\sigma$-algebra of $\mathcal{E}$, $\phi$ an Orlicz function and $\Phi$ a random Orlicz function.
The Orlicz space with respect to $\phi$ is defined by
$$
L^{\phi}=\left\{\zeta \in L^0(\mathcal{F}): E[\phi(d|\zeta|)]<\infty \text { for some } d>0\right\}
$$
and the Orlicz heart is defined by
$$
H^{\phi}=\left\{\zeta \in L^0(\mathcal{F}): E[\phi(d|\zeta|)]<\infty \text { for all } d>0\right\},
$$
where $E[\zeta]$ denotes $\zeta$'s expectation as to the probability $P$.

Denote
$$
|\zeta|_{\phi L}=inf \left\{\lambda>0 : E\left[\phi\left(\frac{|\zeta|}{\lambda}\right)\right] \leq 1\right\}
$$
and
$$
|\zeta|_{\phi O}=sup \left\{|E[\zeta \eta]|: \eta \in L^{\psi}, |\eta|_{\psi L} \leq 1\right\}.
$$
The conditional Orlicz space with respect to $\Phi$ is defined by
$$
L_{\mathcal{F}}^{\Phi}=\left\{\zeta \in L^0(\mathcal{F}): E[\Phi(d|\zeta|)|\mathcal{F}]\in L^{0} ({\mathcal{F}})\text { for some } d \in L_{++}^{0} ({\mathcal{F}})\right\}
$$
and the conditional Orlicz heart is defined by
$$
H_{\mathcal{F}}^{\Phi}=\left\{\zeta \in L^0(\mathcal{F}): E[\Phi(d|\zeta|)|\mathcal{F}]\in L^{0} ({\mathcal{F}})\text { for all } d\in L_{++}^{0} ({\mathcal{F}})\right\},
$$
where $E[\zeta|\mathcal{F}]$ denotes $\zeta$'s conditional expectation as to $\mathcal{F}$.

Denote
$$
|\zeta|_{\Phi L}=\wedge \left\{\lambda\in L_{++}^{0} ({\mathcal{F}}): E\left[\Phi\left(\frac{|\zeta|}{\lambda}\right)\big|\mathcal{F}\right] \leq 1\right\}
$$
and
$$
|\zeta|_{\Phi O}=\vee \left\{|E[\zeta \eta|\mathcal{F}]|: \eta \in L_{\mathcal{F}}^{\Psi}, |\eta|_{\Psi L} \leq 1\right\}.
$$

\begin{definition}\label{def-COSE}
Let $(E,\|\cdot\|)$ be an $RN$ module and $\Phi$ a random Orlicz function. Then
$$
L_{\mathcal{F}}^{\Phi}(E):=\left\{x \in E:\|x\| \in L_{\mathcal{F}}^{\Phi}\right\}
$$
is called the conditional Orlicz space generated from $E$ with respect to $\Phi$ and
$$
H_{\mathcal{F}}^{\Phi}(E):=\left\{x \in E:\|x\| \in H_{\mathcal{F}}^{\Phi}\right\}
$$
is called the conditional Orlicz heart of $E$.
\end{definition}

Furthermore, if we define $$|||x|||_{\Phi L}=\big|\|x\|\big|_{\Phi L}$$ and
$$|||x|||_{\Phi O}=\big|\|x\|\big|_{\Phi O}$$ for all $x \in L_{\mathcal{F}}^{\Phi}(E)$. Then one can easily show that $(L_{\mathcal{F}}^{\Phi}(E),|||\cdot|||_{\Phi L})$ and $(H_{\mathcal{F}}^{\Phi}(E),|||\cdot|||_{\Phi L})$ are $RN$ modules.

In the following, we use $|||\cdot|||_{p}$ to denote the norm on $L_{\mathcal{F}}^p(E)$ for all $p\in [1,+\infty]$.
\begin{example}
Let $(E,\|\cdot\|)$ be an $RN$ module with base $(\Omega, \mathcal{E}, P)$, $\mathcal{F}$ a sub-$\sigma$-algebra of $\mathcal{E}$, and $\Phi$ a random Orlicz function.

1. If $\Phi(t)=t$, then
$$
\Psi(s)=\left\{\begin{array}{ll}
0, & 0\leq s\leq 1, \\
 \infty, & s>1.
\end{array}\right.
$$
Further, we have
$$
\begin{array}{l}
H_{\mathcal{F}}^{\Phi}(E)=L_{\mathcal{F}}^{\Phi}(E)=L_{\mathcal{F}}^{1}(E), \,|||\cdot|||_{\Phi L}=|||\cdot|||_{\Phi O}=|||\cdot|||_{1}, \\
L_{\mathcal{F}}^{\Psi}(E)=L_{\mathcal{F}}^{\infty}(E), \,H_{\mathcal{F}}^{\Psi}(E)=\{0\}, \,|||\cdot|||_{\Psi O}=|||\cdot|||_{\Psi L}=|||\cdot|||_{\infty} .
\end{array}
$$

2. Let $p, q \in(1, \infty)$ be a pair of the $H\ddot{o}lder$ conjugate numbers. If $\Phi(t)=t^p$, then $\Psi(s)=p^{1-q} q^{-1} s^q$. Further, we have
$$
\begin{array}{l}
H_{\mathcal{F}}^{\Phi}(E)=L_{\mathcal{F}}^{\Phi}(E)=L_{\mathcal{F}}^{p}(E), \,|||\cdot|||_{\Phi L}=|||\cdot|||_{p}, \,|||\cdot|||_{\Phi O}=p^{\frac{1}{p}} q^{\frac{1}{q}}|||\cdot|||_{p}, \\
H_{\mathcal{F}}^{\Psi}(E)=L_{\mathcal{F}}^{\Psi}(E)=L_{\mathcal{F}}^{q}(E), \,|||\cdot|||_{\Psi O}=|||\cdot|||_{q}, \,|||\cdot|||_{\Psi L}=p^{-\frac{1}{p}} q^{-\frac{1}{q}}|||\cdot|||_{q}.
\end{array}
$$
\end{example}

In 2022, Wu, Long and Zeng introduced the Orlicz space generated from an $RN$ module $E$ with respect to an Orlicz function $\phi$ defined by
$$
L^{\phi}(E)=\left\{x \in E:\|x\| \in L^{\phi}\right\}
$$
and the Orlicz heart of $E$ defined by
$$
H^{\phi}(E)=\left\{x \in E:\|x\| \in H^{\phi}\right\}.
$$
Define $$||x||_{\phi L}=\big|\|x\|\big|_{\phi L}$$ and
$$||x||_{\phi O}=\big|\|x\|\big|_{\phi O}$$ for all $x \in L^{\phi}(E)$. Wu shows that $(L^{\phi}\left(E\right),||\cdot||_{\phi L})$ and $(H^{\phi}(E),||\cdot||_{\phi L})$ are normed spaces, and if $E$ is complete then
$(L^{\phi}\left(E\right),||\cdot||_{\phi L})$ and $(H^{\phi}(E),||\cdot||_{\phi L})$ are also complete. Now we will show that its converse is also true.

\begin{proposition}
Let $(E,\|\cdot\|)$ be an $RN$ module, $\phi$ a continuous Orlicz function, and $(L^{\phi}\left(E\right),||\cdot||_{\phi L})$ and $(H^{\phi}(E),||\cdot||_{\phi L})$ defined as above. If $(L^{\phi}\left(E\right),||\cdot||_{\phi L})$ and $(H^{\phi}(E),||\cdot||_{\phi L})$ are complete, then $E$ is also complete.
\end{proposition}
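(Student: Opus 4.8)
The plan is to take an arbitrary $(\varepsilon,\lambda)$-Cauchy sequence $\{x_n\}$ in $E$ and to produce a limit, exploiting the completeness of $H^{\phi}(E)$ together with the $L^{0}(\mathcal{F})$-module structure of $E$ for a gluing argument. Since a Cauchy sequence converges as soon as one of its subsequences does, I would first pass to a subsequence with $P(\|x_{n+1}-x_n\|>2^{-n})<2^{-n}$. By the Borel--Cantelli lemma the increments are then a.s.\ summable, so putting
$$
C_k=\Big\{\omega:\ \|x_1\|(\omega)\le k\ \text{and}\ \sum_{n\ge 1}\|x_{n+1}-x_n\|(\omega)\le k\Big\}
$$
yields an increasing sequence in $\mathcal{F}$ with $P(C_k)\to 1$, and on $C_k$ one has $\|x_n\|\le 2k$ for all $n$. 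Hence each $\tilde{I}_{C_k}x_n$ has essentially bounded norm, and since $\phi$ is finite-valued this gives $\tilde{I}_{C_k}x_n\in H^{\phi}(E)$.

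Next I would show that for each fixed $k$ the sequence $\{\tilde{I}_{C_k}x_n\}_n$ is Cauchy for the Luxemburg norm $\|\cdot\|_{\phi L}$. Writing $S_m=\sum_{j\ge m}\|x_{j+1}-x_j\|$, the monotonicity of $|\cdot|_{\phi L}$ gives $\|\tilde{I}_{C_k}(x_n-x_m)\|_{\phi L}\le\big|\tilde{I}_{C_k}S_m\big|_{\phi L}$ whenever $n>m$; as $\tilde{I}_{C_k}S_m$ is bounded by $k$ and decreases to $0$ a.e., dominated convergence (this is where the continuity, hence finiteness, of $\phi$ enters) yields $E[\phi(d\,\tilde{I}_{C_k}S_m)]\to 0$ for every $d>0$, so $\big|\tilde{I}_{C_k}S_m\big|_{\phi L}\to 0$. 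By the assumed completeness of $H^{\phi}(E)$ there is a limit $z^{(k)}\in H^{\phi}(E)$, and because $\|\cdot\|_{\phi L}$-convergence implies $(\varepsilon,\lambda)$-convergence we get $\tilde{I}_{C_k}x_n\to z^{(k)}$ in $E$. Multiplying this convergence by the idempotents $\tilde{I}_{C_j}$, which act continuously, and using $C_j\subseteq C_k$ then yields the compatibility relations $\tilde{I}_{C_k}z^{(k)}=z^{(k)}$ and $\tilde{I}_{C_j}z^{(k)}=z^{(j)}$ for $j\le k$.

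The main obstacle is the final gluing: I must produce a single $x\in E$ with $\tilde{I}_{C_k}x=z^{(k)}$ for every $k$, that is, a countable concatenation of the coherent family $\{z^{(k)}\}$, which is not available a priori since the norms $\|z^{(k)}\|_{\phi L}$ may be unbounded. I would get around this by rescaling. Set $D_1=C_1$, $D_k=C_k\setminus C_{k-1}$, and $u_k=\tilde{I}_{D_k}z^{(k)}\in H^{\phi}(E)$; with the positive reals $a_k=2^{-k}\big(1+\|u_k\|_{\phi L}\big)^{-1}$ the series $\sum_k a_k u_k$ is absolutely convergent in $H^{\phi}(E)$ and hence, by completeness, has a sum $w\in H^{\phi}(E)$. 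Since the $u_k$ are supported on the pairwise disjoint sets $D_k$, continuity of multiplication by $\tilde{I}_{D_k}$ gives $\tilde{I}_{D_k}w=a_k u_k$. Forming the single coefficient $\beta=\sum_k a_k^{-1}\tilde{I}_{D_k}\in L^{0}_{++}(\mathcal{F})$, which is finite a.e.\ because it equals $a_k^{-1}$ on $D_k$, and setting $x=\beta w\in E$, I obtain $\tilde{I}_{D_k}x=a_k^{-1}\tilde{I}_{D_k}w=u_k$ and therefore $\tilde{I}_{C_k}x=\sum_{j\le k}u_j=z^{(k)}$ by the compatibility relations.

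It remains to verify that this $x$ is the sought limit. For each fixed $k$ we have $\tilde{I}_{C_k}(x_n-x)=\tilde{I}_{C_k}x_n-z^{(k)}\to 0$ in probability, while $\{\|x_n-x\|>\varepsilon\}\cap C_k=\{\tilde{I}_{C_k}\|x_n-x\|>\varepsilon\}$, so
$$
\limsup_{n\to\infty}P(\|x_n-x\|>\varepsilon)\le P(\Omega\setminus C_k);
$$
letting $k\to\infty$ and using $P(C_k)\to 1$ gives $x_n\to x$ in the $(\varepsilon,\lambda)$-topology, so $E$ is complete. I expect the rescaling/concatenation step to be the only genuinely delicate point, the truncation and dominated-convergence estimates being routine once the finiteness of $\phi$ is used; note also that only the completeness of $H^{\phi}(E)$ is actually needed.
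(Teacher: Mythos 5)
Your proof is correct, but it takes a genuinely different route from the paper's. The paper also passes to a subsequence (one along which $\|x_{n_k}\|$ converges $P$-a.s.), but then exploits the $L^0$-module structure in a single stroke: it sets $\zeta=\vee_{k\ge 1}\|x_{n_k}\|+1\in L^0_{++}(\mathcal{E})$, which is a.s.\ finite because the subsequence of norms converges a.s., and normalizes $z_k=\zeta^{-1}x_{n_k}$. Then $\|z_k\|<1$ uniformly, so every $z_k$ lies in $L^{\phi}(E)$ outright; dominated convergence makes $\{z_k\}$ Luxemburg-Cauchy, completeness yields a limit $z$, and multiplying back gives $x_{n_k}=\zeta z_k\to\zeta z$ in $E$. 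This division by one dominating element of $L^0_{++}(\mathcal{E})$ renders your entire second half --- the exhaustion $C_k$, the coherent family $z^{(k)}$, and the rescaled sum $w=\sum_k a_k u_k$ with $\beta=\sum_k a_k^{-1}\tilde{I}_{D_k}$ --- unnecessary: your gluing step is essentially a hand-rolled countable-concatenation argument, and the correct insight behind your rescaling ($a_k u_k$ summable in norm, then undone by the $L^0$-coefficient $\beta$) is exactly the mechanism the paper compresses into the single substitution $x\mapsto\zeta^{-1}x$, applied \emph{before} invoking completeness rather than after. What your version buys is a self-contained demonstration that no concatenation-type hypothesis on $E$ is needed and an explicit Borel--Cantelli rate; what the paper's version buys is brevity and the reusable trick that any $(\varepsilon,\lambda)$-Cauchy sequence can be uniformly norm-bounded by a module rescaling. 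Both arguments otherwise share the same skeleton: finiteness of the continuous $\phi$ plus dominated convergence to get Cauchyness in $\|\cdot\|_{\phi L}$, the fact that Luxemburg-norm convergence implies $(\varepsilon,\lambda)$-convergence, and the observation (which you make explicitly, the paper in a closing remark) that completeness of $H^{\phi}(E)$ alone suffices.
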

\begin{proof}
Let $\left\{x_n:n \in \mathbb{N}\right\}$ be a Cauchy sequence in $E$. Then $\left\{\left\|x_n\right\|: n \in \mathbb{N}\right\}$ is a Cauchy sequence in $L^0(\mathcal{E})$. Hence there exists a subsequence $\left\{\left\|x_{n_k}\right\|: k \in \mathbb{N}\right\}$ of $\left\{\left\|x_n\right\|: n \in \mathbb{N}\right\}$ such that $\left\{\left\|x_{n_k}\right\|: k \in \mathbb{N}\right\}$ converges $P-a.s.$ to some $\xi \in L^0(\mathcal{E})$.

Set $\zeta=\vee_{k \geq 1}\left\|x_{n_k}\right\|+1 \in L_{++}^0(\mathcal{E})$ and $z_k=\zeta^{-1} x_{n_k}$ for any $k \in \mathbb{N}$, then $\left\{z_k: k \in \mathbb{N}\right\}$ is a Cauchy sequence in $E$. It is obvious that $\left\|z_k\right\|<1$, then $z_k \in L^{\phi}(E)$.

According to Lebesgue's dominance convergence theorem, we have that $\left\{z_k: k \in \mathbb{N}\right\}$ is a Cauchy sequence in $L^{\phi}(E)$. Then using the fact that $L^{\phi}(E)$ is complete, $\left\{z_k: k \in \mathbb{N}\right\}$ converges to some $z \in L^{\phi}(E)$, which implies that $\left\{z_k: k \in \mathbb{N}\right\}$ converges to some $z \in E$. Thus we can easily see that $\left\{x_n: n \in \mathbb{N}\right\}$ converges to some $\zeta z$.

Similarly, if $H^{\phi}(E)$ is complete, then we can also get that $E$ is complete.
\end{proof}

Based on Proposition 3.3, we have the following conclusion.

\begin{proposition}
Let $(E,\|\cdot\|)$ be an $RN$ module, $\phi$ a continuous Orlicz function, $\Phi$ a random Orlicz function and $(L_{\mathcal{F}}^{\Phi}(E),|||\cdot|||_{\Phi L})$ and $(H_{\mathcal{F}}^{\Phi}(E),|||\cdot|||_{\Phi L})$ defined as above. Then $(L_{\mathcal{F}}^{\Phi}(E),|||\cdot|||_{\Phi L})$ and $(H_{\mathcal{F}}^{\Phi}(E),|||\cdot|||_{\Phi L})$ are complete if and only if $E$ is complete.
\end{proposition}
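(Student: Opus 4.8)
The plan is to prove the two implications separately, following the template of Proposition 3.3 but systematically replacing the ordinary expectation by the conditional expectation $E[\,\cdot\,|\mathcal{F}]$ and Lebesgue's dominated convergence theorem by its conditional analogue. Throughout, completeness is meant in the sense of the $(\varepsilon,\lambda)$-topology, and I will rely on two facts: that convergence in $|||\cdot|||_{\Phi L}$ implies convergence in probability of the norms $\|\cdot\|$ (so that convergence in the conditional Orlicz module forces convergence in $E$), and that any $x\in E$ whose norm $\|x\|$ is bounded above by a positive constant lies in $H_{\mathcal{F}}^{\Phi}(E)$, the latter using the continuity of $\phi$ exactly as in the classical case.

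For the necessity direction, suppose $(L_{\mathcal{F}}^{\Phi}(E),|||\cdot|||_{\Phi L})$ and $(H_{\mathcal{F}}^{\Phi}(E),|||\cdot|||_{\Phi L})$ are complete and let $\{x_n:n\in\mathbb{N}\}$ be Cauchy in $E$. Then $\{\|x_n\|:n\in\mathbb{N}\}$ is Cauchy in $L^0(\mathcal{E})$, so I pass to a subsequence $\{x_{n_k}\}$ whose norms converge $P$-a.s., put $\zeta=\vee_{k\ge 1}\|x_{n_k}\|+1\in L_{++}^0(\mathcal{E})$ and $z_k=\zeta^{-1}x_{n_k}$. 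Since $\|z_k\|<1$, each $z_k\in H_{\mathcal{F}}^{\Phi}(E)\subseteq L_{\mathcal{F}}^{\Phi}(E)$, and $\{z_k\}$ remains Cauchy in $E$ because multiplication by the fixed $L^0(\mathcal{E})$-element $\zeta^{-1}$ is $(\varepsilon,\lambda)$-continuous. The crucial step is to promote this to Cauchyness in $|||\cdot|||_{\Phi L}$: from $\|z_k-z_l\|\le 2$ and $\|z_k-z_l\|\to 0$ in probability, a conditional dominated convergence argument gives $E[\Phi(d\|z_k-z_l\|)\,|\,\mathcal{F}]\to 0$ for a suitable $d\in L_{++}^0(\mathcal{F})$, whence $|||z_k-z_l|||_{\Phi L}\to 0$. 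By the assumed completeness, $z_k\to z$ in $L_{\mathcal{F}}^{\Phi}(E)$, and therefore $z_k\to z$ in $E$; consequently $x_{n_k}=\zeta z_k\to\zeta z$ in $E$. Since a Cauchy sequence with a convergent subsequence converges, $E$ is complete. The argument for $H_{\mathcal{F}}^{\Phi}(E)$ is the same.

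For the sufficiency direction, assume $E$ is complete and let $\{y_n\}$ be Cauchy in $(L_{\mathcal{F}}^{\Phi}(E),|||\cdot|||_{\Phi L})$. Using once more that $|||\cdot|||_{\Phi L}$-Cauchyness forces $\|\cdot\|$-Cauchyness in probability, $\{y_n\}$ is Cauchy in $E$ and hence converges to some $y\in E$; it then remains to verify that $y\in L_{\mathcal{F}}^{\Phi}(E)$ and that $|||y_n-y|||_{\Phi L}\to 0$. This I would obtain by extracting an a.s.-convergent subsequence and applying a conditional Fatou (or monotone convergence) estimate to $E[\Phi(\cdot)\,|\,\mathcal{F}]$, together with the standard Cauchy-sequence upgrade. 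The treatment of $H_{\mathcal{F}}^{\Phi}(E)$ is identical.

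I expect the two analytic bridges between the $(\varepsilon,\lambda)$-topology and the Luxemburg-type norm $|||\cdot|||_{\Phi L}$ to be the main obstacle. The first is justifying the conditional dominated convergence step: since $\{z_k\}$ is only Cauchy in probability, one must pass through subsequences and supply a genuinely $\mathcal{F}$-measurable dominating envelope, while also ensuring that the possibly $+\infty$-valued random Orlicz function $\Phi$ stays finite on the bounded arguments involved. The second is the implication $|||\cdot|||_{\Phi L}\to 0 \Rightarrow \|\cdot\|\to 0$ in probability, which is what feeds convergence back into $E$; establishing it cleanly for a random Orlicz function, rather than a deterministic one, is where the conditional setting departs nontrivially from the computation underlying Proposition 3.3.
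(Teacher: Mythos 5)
Your route is genuinely different from the paper's: the paper never performs any conditional analysis in this proof, but instead observes the set identity $L^{\phi}(L_{\mathcal{F}}^{\Phi}(E))=L^{\phi}(E)$ and then transfers completeness in both directions by combining Proposition 3.3 with [3, Proposition 3.3], applied to the $RN$ module $L_{\mathcal{F}}^{\Phi}(E)$ itself. Your direct adaptation of the Proposition 3.3 argument could in principle be self-contained, but as written it has a genuine gap: the ``fact'' that $\|x\|$ bounded above by a positive constant forces $x\in H_{\mathcal{F}}^{\Phi}(E)$, which you justify ``using the continuity of $\phi$,'' is false in general. Continuity of $\phi$ is a hypothesis on the deterministic Orlicz function; the proposition imposes no continuity or finiteness condition on the random Orlicz function $\Phi$, which by Definition 2.3 may take the value $+\infty$. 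The paper's own Example 3.2 (case 1) exhibits a random Orlicz function $\Psi$ with $H_{\mathcal{F}}^{\Psi}(E)=\{0\}$, so no nonzero element with $\|z_k\|<1$ lies in the heart, and your necessity argument for $H_{\mathcal{F}}^{\Phi}(E)$ collapses at its first step. (For $L_{\mathcal{F}}^{\Phi}(E)$ the step survives: nontriviality gives some $t\in L_{++}^{0}(\mathcal{F})$ with $\Phi(t)$ finite, and $\|z_k\|\leq 1$ yields $E[\Phi(t\|z_k\|)\,|\,\mathcal{F}]\leq \Phi(t)<\infty$, which is the ``for some $d$'' that membership in $L_{\mathcal{F}}^{\Phi}$ requires.)

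The same finiteness issue infects the step you yourself flag as the main obstacle and then leave unresolved: to promote $(\varepsilon,\lambda)$-Cauchyness of $\{z_k\}$ to $|||\cdot|||_{\Phi L}$-Cauchyness you need, for every constant $\lambda>0$, a conditional dominated convergence bound for $E[\Phi(\|z_k-z_l\|/\lambda)\,|\,\mathcal{F}]$ with dominating element $\Phi(2/\lambda)$, and this requires $\Phi(2/\lambda)$ to be finite almost surely for all $\lambda$ --- precisely the finiteness of $\Phi$ you cannot assume. Moreover, establishing $E[\Phi(d\|z_k-z_l\|)\,|\,\mathcal{F}]\to 0$ ``for a suitable $d$,'' as you phrase it, is not sufficient: smallness of the conditional Luxemburg norm demands the modular bound at arbitrarily large scalings $1/\lambda$, not at a single $d$. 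So your proposal is an outline whose two central analytic bridges are acknowledged but not supplied, and one of them is false without an added hypothesis such as $\Phi(c)\in L_{+}^{0}(\mathcal{F})$ for every constant $c>0$. By contrast, the paper's reduction buys freedom from all conditional-expectation estimates, at the price of resting on the asserted identity $\left\{x\in E:\big|\|x\|\big|_{\Phi L}\in L^{\phi}\right\}=\left\{x\in E:\|x\|\in L^{\phi}\right\}$, which it does not verify either; but judged on its own terms, your attempt is incomplete exactly where it departs from the classical computation.
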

\begin{proof}
Note that
$$
\begin{aligned}
L^{\phi}(L_{\mathcal{F}}^{\Phi}(E))& =\left\{x \in L_{\mathcal{F}}^{\Phi}(E):|||x|||_{\Phi L} \in L^{\phi}\right\}\\
&=\left\{x \in E:\big|\|x\|\big|_{\Phi L} \in L^{\phi}\right\}\\
&=\left\{x \in E:\|x\|\in L^{\phi}\right\},
\end{aligned}
$$
thus we have $L^{\phi}(L_{\mathcal{F}}^{\Phi}(E))=L^{\phi}(E).$ According to Proposition 3.3 and [3, Proposition 3.3], we have that
$$L_{\mathcal{F}}^{\Phi}(E)\,\text{is complete iff}\, L^{\phi}\left(L_{\mathcal{F}}^{\Phi}(E)\right)=L^{\phi}(E)\,\text{is complete iff}\, E \,\text{is complete}.$$

Similarly, we can prove that $H_{\mathcal{F}}^{\Phi}\left(E\right)$ is a complete $RN$ module.
\end{proof}

Moreover, we obtain the denseness of the Orlicz heart of $E$ in $E$ with respect to the $(\varepsilon, \lambda)$-topology, which is important in the proofs of main theorems in sequel.

\begin{proposition}\label{den-OHE}
Let $(E,\|\cdot\|)$ be an $RN$ module and $\phi$ a continuous Orlicz function. Then $H^{\phi}(E)$ is dense in $E$ with respect to the $(\varepsilon, \lambda)$-topology.
\end{proposition}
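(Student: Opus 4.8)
The plan is to reduce the statement to a concrete approximation: since the $(\varepsilon,\lambda)$-topology is exactly the topology of convergence in probability of norms, it suffices to show that every $x\in E$ is the limit of a sequence $\{x_n:n\in\mathbb{N}\}\subseteq H^{\phi}(E)$ with $\|x-x_n\|\to 0$ in probability. I would build such a sequence by truncating $x$ through the module action of $L^{0}(\mathcal{E},K)$ on $E$. Concretely, fix a representative of $\|x\|\in L_{+}^{0}(\mathcal{E})$, put $A_n=\{\omega\in\Omega:\|x\|(\omega)\le n\}\in\mathcal{E}$, and set $x_n:=\tilde{I}_{A_n}x\in E$. By property (2) of the random norm, $\|x_n\|=\tilde{I}_{A_n}\|x\|\le n$, so each $\|x_n\|$ is the class of a bounded random variable.

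The next step is to check that $x_n\in H^{\phi}(E)$, i.e. that $\|x_n\|\in H^{\phi}$. This is where continuity of $\phi$ enters decisively: a continuous Orlicz function is finite-valued on $[0,\infty)$, hence $\phi(d\|x_n\|)\le\phi(dn)<\infty$ for every $d>0$, and therefore $E[\phi(d\|x_n\|)]\le\phi(dn)<\infty$ for all $d>0$. Thus $\|x_n\|\in H^{\phi}$, giving $x_n\in H^{\phi}(E)$. For the convergence, observe that $x-x_n=\tilde{I}_{\Omega\setminus A_n}x$, so $\|x-x_n\|=\tilde{I}_{\Omega\setminus A_n}\|x\|$. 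Since $\|x\|$ is almost surely finite, the sets $\Omega\setminus A_n=\{\|x\|>n\}$ shrink to a $P$-null set, whence $P(\Omega\setminus A_n)\to 0$. Consequently, for any $\varepsilon>0$ we get $P(\{\|x-x_n\|>\varepsilon\})\le P(\Omega\setminus A_n)\to 0$, so $x_n\to x$ in the $(\varepsilon,\lambda)$-topology. As $x\in E$ was arbitrary, $H^{\phi}(E)$ is dense in $E$.

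I expect no serious analytic obstacle here; the argument is essentially a truncation. The two points that must be handled with care are: (i) that the truncated elements remain inside $H^{\phi}(E)$, which relies on $\phi$ being finite-valued and is exactly why the hypothesis ``$\phi$ continuous'' is imposed; and (ii) that multiplying $x$ by the indicator class $\tilde{I}_{A_n}$ is legitimate and keeps us in $E$, which is guaranteed by $E$ being an $L^{0}(\mathcal{E},K)$-module. Everything else is the elementary fact that the monotone truncation of an almost surely finite random variable converges to it in probability.
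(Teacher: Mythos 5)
Your proposal is correct and follows essentially the same route as the paper: truncate via $x_n=\tilde{I}_{A_n}x$ with $A_n=\{\omega\in\Omega:\|x\|(\omega)\le n\}$, note $\|x_n\|\in H^{\phi}$, and conclude from $\|x-x_n\|=(1-\tilde{I}_{A_n})\|x\|\to 0$ in probability. You merely spell out two steps the paper leaves implicit, namely that continuity (finiteness) of $\phi$ gives $E[\phi(d\|x_n\|)]\le\phi(dn)<\infty$ for all $d>0$, and that $P(\Omega\setminus A_n)\to 0$ yields the $(\varepsilon,\lambda)$-convergence.
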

\begin{proof}
Let $x \in E$ and $A_n=\{\omega\in\Omega:  \|x\|(\omega)\leq n\}$. Set $x_n=\tilde{I}_{A_n} \cdot x$ for each $n \in \mathbb{N}$. Since $x_n=\tilde{I}_{A_n} \cdot x+\left(1-\tilde{I}_{A_n}\right) \cdot \theta$ and $\theta \in E$, we have $x_n \in E$. Then $x_n \in H^{\phi}(E)$ follows $\|x_{n}\| \in H^{\phi}$. Since $$\left\|x-x_n\right\|=\left(1-\tilde{I}_{A_n}\right)\|x\| \rightarrow 0$$ as $n \rightarrow \infty$, it follows that $\left\{x_n: n \in \mathbb{N}\right\}$ converges to $x$ in the $(\varepsilon, \lambda)$-topology.
\end{proof}

In 2022, Wu, Long and Zeng presented the following dual representation theorem.
\begin{proposition}[\cite{3}]\label{dual-OE}
Let $(E,\|\cdot\|)$ be an $RN$ module and $\phi$ a continuous Orlicz function with conjugate $\psi$. $\left(E^*,\|\cdot\|^*\right)$ denotes the random conjugate space of $\left(E,\|\cdot\|\right)$. Then
$$
\left(H^{\phi}(E),\|\cdot\|_{\phi L}\right)^{\prime} \cong\left(L^{\psi}\left(E^*\right),\|\cdot\|_{\psi O}\right),
$$
where the isometric isomorphism $T:\left(L^{\psi}\left(E^*\right),\|\cdot\|_{\psi O}\right) \rightarrow\left(H^{\phi}(E),\|\cdot\|_{\phi L}\right)^{\prime}$ is given by
$$
[T f](x)=E[f(x)]
$$
for each $f \in L^{\psi}\left(E^*\right)$ and $x \in H^{\phi}(E)$.
\end{proposition}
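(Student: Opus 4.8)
The plan is to show that the prescribed map $T$ is a well-defined, isometric, and surjective linear operator, which splits the argument into three parts; the genuine difficulty lies entirely in surjectivity. Throughout I write $\|x\|$ for the random norm of $x\in E$, $\|f\|^{*}$ for the random norm of $f\in E^{*}$, and abbreviate $\|x\|_{\phi L}=\big|\,\|x\|\big|_{\phi L}$ and $\|f\|_{\psi O}=\big|\,\|f\|^{*}\big|_{\psi O}$. First I would settle well-definedness and the easy norm bound: for $f\in L^{\psi}(E^{*})$ and $x\in H^{\phi}(E)$ the defining inequality $|f(x)|\le\|f\|^{*}\|x\|$ for the random conjugate norm gives
\[
\big|[Tf](x)\big|\le E\big[\,|f(x)|\,\big]\le E\big[\,\|f\|^{*}\,\|x\|\,\big]\le \big|\,\|f\|^{*}\big|_{\psi O}\,\big|\,\|x\|\big|_{\phi L}=\|f\|_{\psi O}\,\|x\|_{\phi L},
\]
the last step being the scalar Orlicz Hölder inequality between the conjugate functions $\psi$ and $\phi$. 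Hence $Tf\in\big(H^{\phi}(E)\big)^{\prime}$ with $\|Tf\|\le\|f\|_{\psi O}$, and $T$ is clearly linear.

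For the reverse inequality, and thus isometry, I would exploit that $\|f\|^{*}=\vee\{|f(x)|:\|x\|\le1\}$ is an upward directed supremum (directedness follows by splicing $x,y$ via $\tilde{I}_{B}x+\tilde{I}_{B^{c}}y$ on $B=\{|f(x)|\ge|f(y)|\}$), so it is reached nondecreasingly by a sequence $x_{n}$ with $\|x_{n}\|\le1$. Testing $Tf$ against elements $\eta\,\mathrm{sgn}(\overline{f(x_{n})})\,x_{n}$ with bounded $\eta\ge0$, $|\eta|_{\phi L}\le1$, produces $E[\eta\,|f(x_{n})|]\to E[\eta\,\|f\|^{*}]$, and taking the supremum over such $\eta$ yields $\|Tf\|\ge\big|\,\|f\|^{*}\big|_{\psi O}=\|f\|_{\psi O}$, giving isometry.

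The substance of the theorem is surjectivity, which I would obtain by disintegration. Given $F\in\big(H^{\phi}(E)\big)^{\prime}$ and a fixed $x\in H^{\phi}(E)$, the set function $A\mapsto F(\tilde{I}_{A}x)$ on $\mathcal{F}$ is finitely additive, and since $\|x\|\in H^{\phi}$ forces $\big|\tilde{I}_{A}\,\|x\|\big|_{\phi L}\to0$ as $P(A)\to0$, it is countably additive and $K$-valued absolutely continuous with respect to $P$; the Radon--Nikodym theorem then furnishes a density I define to be $f(x)\in L^{0}(\mathcal{F},K)$, so that $F(\tilde{I}_{A}x)=E[\tilde{I}_{A}f(x)]$ for all $A\in\mathcal{F}$ and, with $A=\Omega$, $F(x)=E[f(x)]=[Tf](x)$. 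Additivity of $f$ is immediate, the identity $F(\tilde{I}_{A}\tilde{I}_{B}x)=F(\tilde{I}_{A\cap B}x)$ gives $f(\tilde{I}_{B}x)=\tilde{I}_{B}f(x)$, and $L^{0}(\mathcal{F},K)$-homogeneity follows by passing from simple to general scalars.

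The delicate point---and the step I expect to be the main obstacle---is proving that this $f$ is almost surely bounded (so that $f\in E^{*}$) together with the integrability $\|f\|^{*}\in L^{\psi}$. Setting $g=\vee\{|f(x)|:x\in H^{\phi}(E),\ \|x\|\le1\}$ and choosing a nondecreasing directing sequence $x_{n}$, the test elements $\tilde{I}_{A}\,\mathrm{sgn}(\overline{f(x_{n})})\,x_{n}$ satisfy $\int_{A}|f(x_{n})|\,dP=F\big(\tilde{I}_{A}\,\mathrm{sgn}(\overline{f(x_{n})})\,x_{n}\big)\le\|F\|\,\big|\tilde{I}_{A}\big|_{\phi L}$, so that with $A=\{g>t\}$ and $n\to\infty$,
\[
\int_{\{g>t\}} g\,dP\le \|F\|\,\big|\tilde{I}_{\{g>t\}}\big|_{\phi L}\le \|F\|\,\big|\tilde{I}_{\Omega}\big|_{\phi L}<\infty .
\]
As the right-hand side is a finite constant, letting $t\to\infty$ forces $P(g=\infty)=0$, hence $g<\infty$ almost surely and $|f(x)|\le g\,\|x\|$ for all $x\in H^{\phi}(E)$. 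By Proposition~\ref{den-OHE} the heart $H^{\phi}(E)$ is $(\varepsilon,\lambda)$-dense in $E$, so the a.s. bounded homomorphism $f$ extends uniquely to a member of $E^{*}$ with $\|f\|^{*}=g$. The same test-element computation (now against arbitrary bounded $\eta\ge0$ with $|\eta|_{\phi L}\le1$) gives $\big|\,\|f\|^{*}\big|_{\psi O}\le\|F\|$, so $f\in L^{\psi}(E^{*})$; combined with the isometry already established and $F=Tf$, this yields $\|f\|_{\psi O}=\|F\|$, while injectivity is a consequence of isometry. This completes the identification.
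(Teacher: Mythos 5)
Your proof is essentially correct, but be aware that the paper itself contains no proof of this statement: Proposition~\ref{dual-OE} is imported from \cite{3} and then used as a black box inside the proof of Theorem~4.1, so the only fair in-paper comparison is with the proof of the conditional analogue. There, your isometry half coincides with the technique of Lemma~4.2: the same upward-directed supremum $\vee\{|f(x)|:\|x\|\le 1\}$ reached by a nondecreasing sequence, the same $\mathrm{sgn}$-modification making $f(x_n)=|f(x_n)|$, and the same multiplier test elements $\eta x_n$ with $|\eta|_{\phi L}\le 1$ (the paper takes general $\zeta\in H^{\Phi}_{\mathcal{F}}$ with $\zeta\ge 0$; your restriction to bounded $\eta$ is harmless by monotone convergence). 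Your surjectivity half genuinely diverges from the paper: instead of the localization-plus-conditional-expectation bootstrap of Lemma~4.3 (which \emph{presupposes} this proposition), you build $f$ directly by a Radon--Nikodym disintegration of $A\mapsto F(\tilde{I}_A x)$, whose absolute continuity correctly rests on the order continuity of $\|\cdot\|_{\phi L}$ on the heart, i.e.\ $\big|\tilde{I}_A\|x\|\big|_{\phi L}\to 0$ as $P(A)\to 0$ for $\|x\|\in H^{\phi}$ --- this is the standard Guo-style route and buys a self-contained argument where the paper only cites. Two details deserve a line more than you give them: (i) $L^0$-homogeneity for \emph{unbounded} scalars $\zeta$ should be obtained by truncating with $\tilde{I}_{\{|\zeta|\le m\}}$ and using continuity of the extension, since $H^{\phi}(E)$ need not be stable under multiplication by unbounded $\zeta$; (ii) the claim $\|f\|^{*}=g$ for the extension to $E$ needs the remark that any $x\in E$ with $\|x\|\le 1$ is the $(\varepsilon,\lambda)$-limit of the truncations $\tilde{I}_{A_n}x\in H^{\phi}(E)$, which remain in the unit ball --- exactly the construction in Proposition~\ref{den-OHE}, so the supremum over $U(E)$ does not exceed $g$. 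With these filled in, the argument is complete.
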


\section{Dual Representation of the Random Conjugate Space of $H_{\mathcal{F}}^{\Phi}(E)$}
Our main results are as follows.
\begin{theorem}
Let $(E,\|\cdot\|)$ be an $RN$ module with base $(\Omega, \mathcal{E}, P)$, $\mathcal{F}$ a sub-$\sigma$-algebra of $\mathcal{E}$, and $\Phi$ a random Orlicz function with the random conjugate $\Psi$. $\left(E^*,\|\cdot\|^*\right)$ denotes the random conjugate space of $\left(E,\|\cdot\|\right)$. Then
$$
\left(H_{\mathcal{F}}^{\Phi}(E), |||\cdot|||_{\Phi L}\right)^{\ast} \cong\left(L_{\mathcal{F}}^{\Psi}\left(E^*\right), |||\cdot|||_{\Psi O}\right),
$$
where the isometric isomorphism $T:\left(L_{\mathcal{F}}^{\Psi}\left(E^*\right), |||\cdot|||_{\Psi O}\right) \rightarrow\left(H_{\mathcal{F}}^{\Phi}(E), |||\cdot|||_{\Phi L}\right)^{\ast}$ is given by
$$
[T f](x)=E[f(x)|\mathcal{F}]
$$
for each $f \in L_{\mathcal{F}}^{\Psi}\left(E^*\right)$ and $x \in H_{\mathcal{F}}^{\Phi}(E)$.
\end{theorem}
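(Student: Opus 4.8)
The plan is to show that the prescribed map $T$ is a well-defined, norm-preserving $L^0(\mathcal{F},K)$-module homomorphism and then that it is onto, with surjectivity being the crux. First I would fix $f\in L_{\mathcal F}^{\Psi}(E^*)$ and verify that $Tf$, given by $[Tf](x)=E[f(x)\mid\mathcal F]$, lies in $(H_{\mathcal F}^{\Phi}(E))^{\ast}$. Linearity is immediate; for the module property I would use that $f$ is $L^0(\mathcal E,K)$-linear together with the fact that an $\mathcal F$-measurable factor pulls out of the conditional expectation, so that $[Tf](\zeta x)=E[\zeta f(x)\mid\mathcal F]=\zeta\,[Tf](x)$ for all $\zeta\in L^0(\mathcal F,K)$. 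Boundedness and the bound $|||Tf|||^{\ast}\le|||f|||_{\Psi O}$ would follow from a conditional H\"older inequality $|[Tf](x)|\le E[\|f\|^{*}\|x\|\mid\mathcal F]\le|\|x\||_{\Phi L}\,|\|f\|^{*}|_{\Psi O}=|||x|||_{\Phi L}\,|||f|||_{\Psi O}$, which I would derive from the Fenchel--Young inequality $ts\le\Phi(t)+\Psi(s)$ at the level of $L^0(\mathcal F)$ and the definitions of $|\cdot|_{\Phi L}$ and $|\cdot|_{\Psi O}$.

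For the reverse inequality $|||Tf|||^{\ast}\ge|||f|||_{\Psi O}$ I would unfold $|||f|||_{\Psi O}=|\|f\|^{*}|_{\Psi O}=\vee\{|E[\|f\|^{*}\eta\mid\mathcal F]|:\eta\in L_{\mathcal F}^{\Phi},\,|\eta|_{\Phi L}\le1\}$ and, for each admissible $\eta$, build a nearly norming element of the heart. Using that the essential supremum defining $\|f\|^{*}=\vee\{|f(y)|:\|y\|\le1\}$ is attained along a sequence (directedness and countable concatenation in Guo's theory), I can pick $y$ with $\|y\|\le1$ and $f(y)$ close to $\|f\|^{*}$ up to a unimodular factor, and set $x=\eta\,\overline{\mathrm{sgn}(f(y))}\,y$. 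Then $\|x\|=\eta\|y\|\le\eta$, so $|||x|||_{\Phi L}\le|\eta|_{\Phi L}\le1$, while $E[f(x)\mid\mathcal F]$ approximates $E[\|f\|^{*}\eta\mid\mathcal F]$; a truncation such as $\eta\wedge n$ keeps $x$ inside $H_{\mathcal F}^{\Phi}(E)$ rather than merely $L_{\mathcal F}^{\Phi}(E)$. Taking the supremum over $\eta$ yields the lower bound, so $T$ is an isometry, whence injectivity is automatic.

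The hard part is surjectivity, for which I would descend to the scalar theory via Proposition~\ref{dual-OE}. Given $F\in(H_{\mathcal F}^{\Phi}(E))^{\ast}$, fix a continuous scalar Orlicz function $\phi$ and use the heart analogue of the computation in Proposition~3.4, namely $H^{\phi}(H_{\mathcal F}^{\Phi}(E))=H^{\phi}(E)$, to regard the scalarization $x\mapsto E[F(x)]$ as a bounded linear functional on $(H^{\phi}(E),\|\cdot\|_{\phi L})$; Proposition~\ref{dual-OE} then produces $f\in L^{\psi}(E^{*})$ with $E[F(x)]=E[f(x)]$ for all $x$ in the heart. To upgrade this scalar identity to $F(x)=E[f(x)\mid\mathcal F]$, I would test against $\tilde I_{A}x$ for $A\in\mathcal F$, using $F(\tilde I_{A}x)=\tilde I_{A}F(x)$ and $E[\tilde I_{A}f(x)]=E[\tilde I_{A}\,E[f(x)\mid\mathcal F]]$, so that the two $\mathcal F$-measurable variables $F(x)$ and $E[f(x)\mid\mathcal F]$ agree after integration over every $A\in\mathcal F$ and are therefore equal. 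Density of the heart (Proposition~\ref{den-OHE}) guarantees that both $F$ and $f$ are determined by their action there, and the isometry already proved forces $|||f|||_{\Psi O}=|||F|||^{\ast}$, in particular $f\in L_{\mathcal F}^{\Psi}(E^{*})$.

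I expect the genuine obstacle to be concentrated in this last step, and to be twofold: first, correctly matching the random Orlicz structure $\Phi$ over $\mathcal F$ with the auxiliary scalar Orlicz function $\phi$ so that boundedness of the scalarized functional transfers faithfully; and second, the localization and countable-concatenation bookkeeping required to pass from equality in expectation to equality of the $\mathcal F$-conditional expectations and to certify that $\|f\|^{*}$ lands in $L_{\mathcal F}^{\Psi}$ rather than only in the scalar $L^{\psi}$. The remaining ingredients---the conditional H\"older inequality, the module-homomorphism property, and the attainment of norming elements---are the random-module counterparts of the classical computations and should go through routinely.
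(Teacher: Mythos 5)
Your isometry half tracks the paper's Lemma 4.2 almost verbatim: the conditional H\"older bound $|E[f(x)\mid\mathcal F]|\le E[\|f\|^*\|x\|\mid\mathcal F]\le |||f|||_{\Psi O}\,|||x|||_{\Phi L}$ for one direction, and for the other a norming sequence $\{x_n\}$ in $U(E)$ (upward directedness of $\{|f(x)|:x\in U(E)\}$, sgn-correction so $f(x_n)=|f(x_n)|$) multiplied by a nonnegative $\zeta\in H_{\mathcal F}^{\Phi}$ with $|\zeta|_{\Phi L}\le1$; your $x=\eta\,\overline{\mathrm{sgn}(f(y))}\,y$ with truncation $\eta\wedge n$ is the same device. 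That part is fine.

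The genuine gap is in surjectivity. You propose to scalarize directly via $x\mapsto E[F(x)]$ and feed this to Proposition \ref{dual-OE}, but this step fails: $F(x)$ is an element of $L^0(\mathcal F,K)$ and $\|F\|\in L^0_+(\mathcal F)$ is in general unbounded, so $E[F(x)]$ need not be finite, and even where defined the functional $x\mapsto E[F(x)]$ need not be bounded on $(H^{\phi}(E),\|\cdot\|_{\phi L})$ --- the only available estimate is $|E[F(x)]|\le E[\|F\|\cdot|||x|||_{\Phi L}]$, which is useless without an $L^\infty$ bound on $\|F\|$. This is exactly why the paper's Lemma 4.3 begins by fixing a representative $\zeta$ of $\|F\|$ and slicing $\Omega$ into $A_n=\{n-1\le\zeta<n\}$: on each slice $|\tilde I_{A_n}F(x)|\le n\,|||x|||_{\Phi L}$, so the scalar duality applies to $\tilde I_{A_n}F$ and yields $f_n\in L^{\psi}(E^*)$; the $\tilde I_A$-testing over $A\in\mathcal F$ (which you do have) and the density of $H^{\phi}(E)$ then give $\tilde I_{A_n}F(x)=E[\tilde I_{A_n}f_n(x)\mid\mathcal F]$ on all of $H_{\mathcal F}^{\Phi}(E)$, and finally $f=\sum_{n\ge1}\tilde I_{A_n}f_n$ is assembled using the countable concatenation property of $L_{\mathcal F}^{\Psi}(E^*)$. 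You flagged ``localization and countable-concatenation bookkeeping'' as a likely obstacle, but your plan never supplies the truncation that makes the first application of Proposition \ref{dual-OE} legitimate, so as written the argument stalls at its crucial step. Relatedly, your closing claim that the isometry ``forces $f\in L_{\mathcal F}^{\Psi}(E^*)$'' is circular: $Tf$ and $|||f|||_{\Psi O}$ only make sense after membership is established, which in the paper comes from $f_n\in L^{\psi}(E^*)\subset L_{\mathcal F}^{\Psi}(E^*)$ slice by slice together with concatenation, not from the isometry.
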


We will divide the proof of Theorem 4.1 into the following two Lemmas 4.2 and 4.3.

\begin{lemma}
$T$ is well defined and isometric.
\end{lemma}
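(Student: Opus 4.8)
The plan is to prove two things: first, that the map $T$ sends each $f \in L_{\mathcal{F}}^{\Psi}(E^*)$ to a genuine element of the random conjugate space $(H_{\mathcal{F}}^{\Phi}(E))^{*}$; and second, that $T$ preserves norms, i.e. $\|Tf\|^{*} = |||f|||_{\Psi O}$ for every such $f$. The natural route is to reduce everything to the corresponding \emph{non-conditional} facts already recorded in Proposition~\ref{dual-OE} (the Wu--Long--Zeng theorem) by means of a conditional H\"older-type inequality adapted to the $RN$-module setting.

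For well-definedness I would first verify that $[Tf](x) = E[f(x)\,|\,\mathcal{F}]$ is almost surely well-defined and $L^0(\mathcal{F})$-linear in $x$. The key analytic input is a conditional H\"older inequality of the form $|E[f(x)\,|\,\mathcal{F}]| \le E[|f(x)|\,|\,\mathcal{F}] \le E[\,\|f\|^{*}\,\|x\|\,|\,\mathcal{F}]$, followed by the Young-type bound $\|f\|^{*}\,\|x\| \le \Phi(\|x\|) + \Psi(\|f\|^{*})$ coming from the defining relation $\Psi(s) = \vee\{ts - \Phi(t) : t \in L_{+}^{0}(\mathcal{F})\}$. Since $x \in H_{\mathcal{F}}^{\Phi}(E)$ forces $E[\Phi(d\|x\|)\,|\,\mathcal{F}] \in L^{0}(\mathcal{F})$ for all $d \in L_{++}^{0}(\mathcal{F})$ and $f \in L_{\mathcal{F}}^{\Psi}(E^*)$ forces $E[\Psi(d\|f\|^{*})\,|\,\mathcal{F}] \in L^{0}(\mathcal{F})$ for some $d \in L_{++}^{0}(\mathcal{F})$, these bounds show $E[|f(x)|\,|\,\mathcal{F}]$ is finite-valued in $L^{0}(\mathcal{F})$, so $[Tf](x) \in L^{0}(\mathcal{F})$. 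Almost-sure boundedness of $Tf$ as a module homomorphism then yields $Tf \in (H_{\mathcal{F}}^{\Phi}(E))^{*}$; I would extract the bounding element $\zeta \in L_{+}^{0}(\mathcal{F})$ directly from the same H\"older estimate.

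For the isometry I would prove the two inequalities separately. The bound $\|Tf\|^{*} \le |||f|||_{\Psi O}$ should follow from the conditional H\"older inequality together with the very definition of $|\cdot|_{\Psi O}$ as a conditional-expectation supremum. The reverse inequality $\|Tf\|^{*} \ge |||f|||_{\Psi O}$ is where I would supply, for each admissible test element, an $x \in H_{\mathcal{F}}^{\Phi}(E)$ with $|||x|||_{\Phi L} \le 1$ that nearly attains the supremum defining $|||f|||_{\Psi O}$; the natural choice exploits $\mathrm{sgn}$ to align phases, setting $x$ proportional to $\mathrm{sgn}(\overline{\eta})$ for a near-optimal $\eta$ from the definition of $|\cdot|_{\Psi O}$, and then using the countable-concatenation / upward-directedness structure of $\bar{L}^{0}(\mathcal{F})$ (from the complete-lattice property cited after the definition of $\leq$) to patch local choices into a single global witness over $\Omega$.

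The main obstacle I anticipate is precisely this gluing step in the reverse inequality: unlike the scalar case, the supremum $|||f|||_{\Psi O} = \vee\{|E[f\eta\,|\,\mathcal{F}]| : \ldots\}$ is a supremum in the complete lattice $\bar{L}^{0}(\mathcal{F})$ taken over an $L^{0}(\mathcal{F})$-module of test functions, so a single approximating element need not exist and one must realize the supremum as an increasing limit of a countable family and then amalgamate the corresponding witnesses using the $\sigma$-stability of $E$ and the $(\varepsilon,\lambda)$-topology. I expect Proposition~\ref{den-OHE} to be essential here, allowing me to approximate arbitrary elements by those in the Orlicz heart so the pairing behaves continuously, and I would invoke the local structure of $RN$ modules (restriction via $\tilde{I}_A$) to reduce the lattice supremum to a countable one before passing to the limit.
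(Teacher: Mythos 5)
Your first half is essentially the paper's argument: the conditional H\"older estimate $|E[f(x)\,|\,\mathcal{F}]| \leq E\left[\|f\|^{*}\|x\|\,|\,\mathcal{F}\right] \leq \big|\|f\|^{*}\big|_{\Psi O}\,\big|\|x\|\big|_{\Phi L}$ gives well-definedness and $\|Tf\| \leq |||f|||_{\Psi O}$ in one stroke (your Young-inequality detour only yields finiteness and is subsumed by this). The genuine gap is in the reverse inequality. After reducing, as the paper does, to showing $\|Tf\| \geq E\left[\|f\|^{*}\zeta\,|\,\mathcal{F}\right]$ for each fixed $\zeta \in H_{\mathcal{F}}^{\Phi}$ with $\zeta \geq 0$ and $|\zeta|_{\Phi L} \leq 1$, you must manufacture test elements of $E$ itself --- not of $L^{0}(\mathcal{F})$ --- at which $f$ nearly attains its random norm. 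Your proposed witness ``$x$ proportional to $\mathrm{sgn}(\overline{\eta})$'' does not exist: $\eta$ is a scalar test function, $\mathrm{sgn}(\overline{\eta})$ lives in $L^{0}$, and in a general $RN$ module there is no canonical unit vector in the ``direction of $f$''; indeed $f$ need not attain $\|f\|^{*}$ anywhere on the unit ball. The missing idea is the paper's key step: the set $\{|f(x)| : x \in U(E)\}$, where $U(E)=\{x \in E : \|x\| \leq 1\}$, is upward directed (given $x,y$, patch them as $\tilde{I}_{A}x+(1-\tilde{I}_{A})y$ with $A=\{|f(x)| \geq |f(y)|\}$), so there is a sequence $x_{n} \in U(E)$ with $|f(x_{n})| \uparrow \|f\|^{*}$; replacing $x_{n}$ by $(\mathrm{sgn}\,f(x_{n}))\,x_{n}$ --- this is where $\mathrm{sgn}$ genuinely enters, to align the phase of $f(x_{n})$, not of $\eta$ --- gives $f(x_{n}) \uparrow \|f\|^{*}$. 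One then tests $Tf$ against $\zeta x_{n}$, which satisfies $|||\zeta x_{n}|||_{\Phi L} \leq |\zeta|_{\Phi L} \leq 1$, and concludes $E\left[\|f\|^{*}\zeta\,|\,\mathcal{F}\right]=\lim_{n} E\left[f(\zeta x_{n})\,|\,\mathcal{F}\right] \leq \|Tf\|$ by conditional monotone convergence.

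Relatedly, your directedness-and-gluing machinery is aimed at the wrong supremum: applying it to $\vee\{|E[\|f\|^{*}\eta\,|\,\mathcal{F}]| : \eta\}$ only exhibits that supremum as an increasing limit of scalar pairings; it never converts a scalar witness $\eta$ into a module witness $x$ with $f(x) \approx \|f\|^{*}\eta$, which is exactly the norm-attainment problem above and is nowhere addressed in your sketch. (The countable-concatenation/$\tilde{I}_{A}$-patching you invoke is, correctly, the mechanism behind the directedness of $\{|f(x)| : x \in U(E)\}$, but you never apply it there.) Finally, Proposition~\ref{den-OHE} is not needed for this lemma: the paper reserves it for the surjectivity argument (Lemma 4.3); in the isometry proof the only approximation issue is the implicit restriction of the test functions in $|\cdot|_{\Psi O}$ to nonnegative elements of the conditional Orlicz heart, which the paper absorbs into the displayed computation of $|||f|||_{\Psi O}$.
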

\begin{proof}
For any fixed $f \in L_{\mathcal{F}}^{\Psi}\left(E^*\right)$, we will prove $T f \in\left(H_{\mathcal{F}}^{\Phi}(E),|||\cdot|||_{\Phi L}\right)^{\ast}$ and $\|T f\|=|||f|||_{\Psi O}$.
For any $x \in H_{\mathcal{F}}^{\Phi}(E)$, we have:
$$
\begin{aligned}
|[T f](x)|
&=|E[f(x)|\mathcal{F}]| \\
&\leq E\left[\|f\|^*\|x\|\right|\mathcal{F}] \\
&\leq\big|\|f\|^*\big|_{\Psi O} \big|\|x\|\big|_{\Phi L}\\
&=|||f|||_{\Psi O}|||x|||_{\Phi L},
\end{aligned}
$$
which shows that $T f \in\left(H_{\mathcal{F}}^{\Phi}(E),|||\cdot|||_{\Phi L}\right)^{\ast}$ and $\|T f\|\leq |||f|||_{\Psi O}$, namely $T$ is well defined.

Next, we remain to show $\|T f\|\geq|||f|||_{\Psi O}$.
Observing that
$$
\begin{aligned}
|||f|||_{\Psi O}
&=\big|\|f\|^*\big|_{\Psi O}\\
&=\vee \left\{|E\left[\|f\|^* \zeta|\mathcal{F}\right]|:\zeta \in H_{\mathcal{F}}^{\Phi}, |\zeta|_{\Phi L} \leq 1\right\}
\\
&=\vee \left\{E\left[\|f\|^* \zeta|\mathcal{F}\right]: \zeta \in H_{\mathcal{F}}^{\Phi}, \zeta\geq 0, |\zeta|_{\Phi L} \leq 1\right\},
\end{aligned}
$$
we only need to prove $\|T f\| \geq E\left[\|f\|^* \zeta|\mathcal{F}\right]$ for any fixed $\zeta \in H_{\mathcal{F}}^{\Phi}$ with $\zeta\geq 0$ and $|\zeta|_{\Phi L} \leq 1$.

Moreover, we can easily verify that the set $\{|f(x)|: x \in U(E)\}$ is upward directed,
where the random closed unit ball of $E$ is defined by $$U(E):=\{x\in E:\|x\| \leq 1\}.$$ Thus there exists a sequence $\left\{x_n: n \in \mathbb{N}\right\}$ in $U(E)$ such that $\left\{\left|f\left(x_n\right)\right|: n \in \mathbb{N}\right\}$ converges to $\vee\{|f(x)|: x \in U(E)\}=\|f\|^*$ in a nondecreasing way.

Suppose that $f\left(x_n\right)=\left|f\left(x_n\right)\right|$ for each $n$, otherwise each $x_n$ will be replaced with $\left({sgn f}\left(x_n\right)\right) x_n$. Hence we have
$$\lim _{n \rightarrow \infty} f\left(\zeta x_n\right)=\lim _{n \rightarrow \infty} \zeta f\left(x_n\right)=\zeta\|f\|^*.$$

Finally, since $\left\|\zeta x_n\right\|=\zeta\left\|x_n\right\| \leq \zeta$ for all $x_n \in U(E)$ and $\zeta\geq 0$, it follows that $$|||\zeta x_n|||_{\Phi L}=\big|\|\zeta x_n\|\big|_{\Phi L} \leq|\zeta|_{\Phi L} \leq 1.$$
Thus $$E\left[f\left(\zeta x_n\right)|\mathcal{F}\right]=[T f]\left(\zeta x_n\right) \leq\|T f\|.$$
Consequently, we obtain that $$E\left[\|f\|^* \zeta|\mathcal{F}\right]=\lim _{n \rightarrow \infty} E\left[f\left(\zeta x_n\right)|\mathcal{F}\right] \leq\|T f\|.$$
\end{proof}

\begin{lemma}
$T$ is surjective.
\end{lemma}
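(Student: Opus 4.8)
The plan is to recover, from an arbitrary $g \in \left(H_{\mathcal{F}}^{\Phi}(E), |||\cdot|||_{\Phi L}\right)^{\ast}$, a functional $f \in L_{\mathcal{F}}^{\Psi}(E^*)$ with $Tf = g$, thereby mirroring in conditional form the unconditional representation of Proposition \ref{dual-OE}. The guiding observation is that $g$ records only the conditional expectation $E[f(\cdot)\,|\,\mathcal{F}]$, so $f$ must be manufactured by ``undoing'' the conditioning; the natural device is a Radon--Nikodym construction that exploits the fact that, although $g$ is merely $L^0(\mathcal{F})$-linear, it reacts nontrivially to $\mathcal{E}$-measurable test directions. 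Write $\gamma := \|g\| \in L_+^0(\mathcal{F})$ for the random norm of $g$, so that $|g(x)| \leq \gamma\,|||x|||_{\Phi L}$ for every $x$. After localizing over the $\mathcal{F}$-sets $\{\gamma \leq n\}$ (or appealing to countable concatenation) I may assume $\gamma$ bounded, and I restrict attention to the bounded elements $y \in E$, which lie in $H_{\mathcal{F}}^{\Phi}(E)$ and, by the truncation argument of Proposition \ref{den-OHE}, are $(\varepsilon,\lambda)$-dense there.

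For such $y$ I would introduce the scalar set function $\mu_y(A) := E[g(\tilde{I}_A\, y)]$, $A \in \mathcal{E}$. Continuity of $g$ together with dominated convergence makes $\mu_y$ a countably additive, $P$-absolutely continuous measure, and the Radon--Nikodym theorem yields a unique $f(y) \in L^0(\mathcal{E}, K)$ with $\mu_y(A) = E[\tilde{I}_A\, f(y)]$. The first round of checks is algebraic: additivity of $\mu_y$ in $y$ and the relation $\mu_{\tilde{I}_B y}(A) = \mu_y(A \cap B)$ for $B \in \mathcal{E}$ force $f$ to be additive and $L^0(\mathcal{E})$-homogeneous, hence a module homomorphism; and since $g$ is $L^0(\mathcal{F})$-linear, testing against $A \in \mathcal{F}$ gives $\mu_y(A) = E[\tilde{I}_A\, g(y)]$ and therefore $E[f(y)\,|\,\mathcal{F}] = g(y)$. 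This is precisely $Tf = g$ on bounded elements, and it extends to all of $H_{\mathcal{F}}^{\Phi}(E)$ because both $Tf$ and $g$ are $(\varepsilon,\lambda)$-continuous and bounded elements are dense.

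It remains to place $f$ in the correct space. I would first show that $f$ is almost surely bounded, so that $f \in E^*$: if $|f(y)| > \zeta\|y\|$ on a set of positive measure for a suitable candidate $\zeta$, then choosing $A$ adapted to that set in the estimate $|E[\tilde{I}_A f(y)]| = |\mu_y(A)| \leq E[\gamma\,|||\tilde{I}_A y|||_{\Phi L}]$ yields a contradiction. Once $f \in E^*$ is available, the random Hölder-type inequality relating $|\cdot|_{\Phi L}$ and $|\cdot|_{\Psi O}$ together with the conjugacy of $\Phi$ and $\Psi$ (the random Young inequality) shows $|||f|||_{\Psi O} = \big|\|f\|^*\big|_{\Psi O} \leq \gamma < \infty$, so $f \in L_{\mathcal{F}}^{\Psi}(E^*)$; combined with the reverse inequality already proved in Lemma 4.2 this also recovers the isometry $|||f|||_{\Psi O} = \|g\|$.

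The hard part will be this last paragraph: extracting a genuine almost sure pointwise bound for the Radon--Nikodym density $f$ and, simultaneously, controlling its random Orlicz norm, since the Luxemburg functional $|||\cdot|||_{\Phi L}$ is nonlinear and does not pass easily through the set function $\mu_y$. Managing the integrability needed to define $\mu_y$ globally rather than on each $\{\gamma \leq n\}$, and gluing the local data into a single $f \in E^*$ by countable concatenation, is the accompanying technical burden; an alternative that sidesteps the explicit density is a separation argument—using that the range of $T$ is closed by the isometry of Lemma 4.2 and that $E^*$ separates the points of $E$ (truncating any separating functional into $L_{\mathcal{F}}^{\Psi}(E^*)$)—but this merely relocates the same boundedness difficulty into the Hahn--Banach step.
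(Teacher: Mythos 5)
Your construction takes a genuinely different route from the paper, but as it stands it has real gaps at exactly the decisive points. The paper never builds $f$ by a direct Radon--Nikodym argument: it stratifies $\Omega$ into the $\mathcal{F}$-measurable sets $A_n=\{n-1\le\zeta<n\}$ ($\zeta$ a representative of $\|F\|$), scalarizes each piece by integration, $x\mapsto\int_\Omega \tilde{I}_{A_n}F(x)\,dP$ on $H^{\phi}(E)$, invokes the known unconditional representation (Proposition \ref{dual-OE}) to obtain $f_n\in L^{\psi}(E^*)$ with $\int_\Omega \tilde{I}_{A_n}F(x)\,dP=\int_\Omega f_n(x)\,dP$, identifies $\tilde{I}_{A_n}F(x)=E[f_n(x)\,|\,\mathcal{F}]$ by testing against all $A\in\mathcal{F}$, extends from $H^{\phi}(E)$ to $H_{\mathcal{F}}^{\Phi}(E)$ by the denseness result (Proposition \ref{den-OHE}), and glues $f=\sum_{n\ge1}\tilde{I}_{A_n}f_n$ via the countable concatenation property. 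In other words, all of the Radon--Nikodym work \emph{and} the Orlicz-norm control are outsourced to Proposition \ref{dual-OE}; by declining to use that proposition you are forced to re-prove its hardest content in conditional form, and you do not.

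Concretely, two steps are missing rather than merely technical. First, the membership $f\in L_{\mathcal{F}}^{\Psi}(E^*)$: the tool you name for the bound $|||f|||_{\Psi O}\le\gamma$ runs in the wrong direction. The random H\"older-type inequality bounds $|E[f(x)\,|\,\mathcal{F}]|$ by $|||f|||_{\Psi O}\,|||x|||_{\Phi L}$ --- that is the inequality already used in Lemma 4.2 to show $\|Tf\|\le|||f|||_{\Psi O}$ --- and it cannot produce the reverse estimate $|||f|||_{\Psi O}\le\|g\|$. What is needed is the opposite duality argument: one must first show $\|f\|^*\in L_{\mathcal{F}}^{\Psi}$ at all (a conditional Young-inequality estimate; without it the quantity $\big|\|f\|^*\big|_{\Psi O}$ is not even available to work with), and then bound $E[\|f\|^*\eta\,|\,\mathcal{F}]$ for every $\eta\ge0$ with $|\eta|_{\Phi L}\le1$ via the upward-directed approximation $\|f\|^*=\vee\{|f(x)|:\|x\|\le1\}$, as in the second half of Lemma 4.2. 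You explicitly concede this is ``the hard part,'' but it is precisely the substance of the lemma; the a.s.-boundedness argument ($f\in E^*$) is likewise only gestured at, with an unspecified ``suitable candidate $\zeta$.'' Second, the integrability needed to define $\mu_y(A)=E[g(\tilde{I}_A y)]$ does not follow from your reductions: even with $\gamma$ bounded and $\|y\|\le M$, you only get $|||\tilde{I}_A y|||_{\Phi L}\le M\,|1|_{\Phi L}$, and $|1|_{\Phi L}\in L_{+}^0(\mathcal{F})$ need not be essentially bounded or integrable, since the random Orlicz function $\Phi$ varies with $\omega$; you would need a further $\mathcal{F}$-localization adapted to $\Phi$, making the final gluing a genuine countable-concatenation argument over a partition depending on both $\gamma$ and $\Phi$, which the sketch does not carry out. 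So the proposal is a plausible alternative skeleton --- essentially a from-scratch conditional version of Proposition \ref{dual-OE} --- but its central estimates are absent, whereas the paper's stratify-scalarize-glue reduction is designed exactly to avoid having to prove them.
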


\begin{proof}
Suppose that $F \in \left(H_{\mathcal{F}}^{\Phi}(E),|||\cdot|||_{\Phi L}\right)^{\ast}$. We will prove that there exists an $f \in$ $L_{\mathcal{F}}^{\Psi}\left(E^*\right)$ such that $F=Tf$.

Let $\zeta$ be a given representative of $\|F\|$ and $A_n=\{\omega \in \Omega :$ $n-1 \leq \zeta(\omega)<n\}$ for each $n \in \mathbb{N}$. Since $|F(x)| \leq\|F\|\cdot|||x|||_{\Phi L}$, it follows that $$\left|\tilde{I}_{A_n} F(x)\right| \leq n|||x|||_{\Phi L}$$ for all $x \in H_{\mathcal{F}}^{\Phi}(E)$ and $n \in \mathbb{N}$.

Next, let $n$ be fixed, note that
$$\left|\int_{\Omega}\tilde{I}_{A_n} F(x) d P\right| \leq \int_{\Omega}\left|\tilde{I}_{A_n} F(x)\right| d P \leq n \int_{\Omega}|||x|||_{\Phi L} d P$$
for all $x \in H^{\phi}(E)$. According to Proposition \ref{dual-OE}, we have that there exists an $f_n \in L^{\psi}\left(E^*\right)$ such that $$\int_{\Omega}\tilde{I}_{A_n} F(x) d P=\int_{\Omega} f_n(x) d P$$ for all $x \in H^{\phi}(E)$.

Since $\tilde{I}_A H^{\phi}(E) \subset H^{\phi}(E)$ for all $A \in \mathcal{F}$, it follows that
$$\int_A\tilde{I}_{A_n} F(x) d P=\int_A f_n(x) d P=\int_A E\left[f_n(x) \mid \mathcal{F}\right] d P$$
for all $x \in H^{\phi}(E)$ and $A \in \mathcal{F}$. Then by observing $\tilde{I}_{A_n} F(x) \in L^0(\mathcal{F}, K)$, we have that
$\tilde{I}_{A_n} F(x)=E\left[f_n(x) \mid \mathcal{F}\right]$ for all $x \in H^{\phi}(E)$.

Since $f_n \in L^{\psi}\left(E^*\right) \subset L_{\mathcal{F}}^{\Psi}\left(E^*\right)$, we have that $T{f_n} \in\left(H_{\mathcal{F}}^{\Phi}(E),|||\cdot|||_{\Phi L}\right)^{\ast}$, which shows that $\tilde{I}_{A_n} F$ and $T{f_n}$ are equal on $H^{\phi}(E)$.

Subsequently, according to Proposition \ref{den-OHE}, $H^{\phi}(H_{\mathcal{F}}^{\Phi}(E))=H^{\phi}(E)$ is dense in $H_{\mathcal{F}}^{\Phi}(E)$ with respect to the $(\varepsilon, \lambda)$-topology, namely we have that $\tilde{I}_{A_n} F(x)=E\left[f_n(x) \mid \mathcal{F}\right]$ for all $x \in H_{\mathcal{F}}^{\Phi}(E)$.

Furthermore, we can get that $\tilde{I}_{A_n} F(x)=E\left[\tilde{I}_{A_n} f_n(x) \mid \mathcal{F}\right]$ for all $x \in H_{\mathcal{F}}^{\Phi}(E)$.

Finally, set $f(x)=\sum\limits_{n \geq 1} \tilde{I}_{A_n} f_n(x)$ for all $x \in H_{\mathcal{F}}^{\Phi}(E)$, since $L_{\mathcal{F}}^{\Psi}\left(E^*\right)$ has the countable concatenation property, it follows that $f \in$ $L_{\mathcal{F}}^{\Psi}\left(E^*\right)$. Then we have that $$F(x)=E[f(x) \mid \mathcal{F}]=[Tf](x)$$ for all $x \in H_{\mathcal{F}}^{\Phi}(E)$.
\end{proof}

\begin{remark}
If we take $\Phi(t)=t^p$ for all $p \in[1, \infty)$ in Theorem 4.1, then we obtain
$$
\left(L_{\mathcal{F}}^p(E),|||\cdot|||_p\right)^{*} \cong\left(L_{\mathcal{F}}^q\left(E^*\right),|||\cdot|||_q\right) .
$$
Thus the results in this section extend one of the results established by Guo [12].
\end{remark}

\begin{remark}
Let $\mathcal{F}=\{\Omega,\emptyset\}$, then $\left(H_{\mathcal{F}}^{\Phi}(E),|||\cdot|||_{\Phi L}\right)$ and $\left(L_{\mathcal{F}}^{\Psi}\left(E^{*}\right),|||\cdot|||_{\Psi O}\right)$ are exactly $\left(H^{\phi}(E),\|\cdot\|_{\phi L}\right)$ and $\left(L^{\psi}\left(E^{*}\right),\|\cdot\|_{\psi O}\right)$, respectively, with extend one of the results in 2022.

Further, if we take $\mathcal{F}=\{\Omega,\emptyset\}$ and $\Phi(t)=t^p$ for all $p \in[1, \infty)$ in Theorem 4.1, then we obtain
$$\left(L^p(E),\|\cdot\|_p\right)^{\prime} \cong\left(L^q\left(E^*\right),\|\cdot\|_q\right).$$
Thus the results extend one of the results established by Guo [15].
\end{remark}

\begin{remark}
If we take $(E,\|\cdot\|)=(L^{0}(\mathcal{E}),|\cdot|)$, then it is obvious that $$L_{\mathcal{F}}^{\Phi}(E)=L_{\mathcal{F}}^{\Phi}(\mathcal{E})=:\left\{x \in L^0(\mathcal{E}): |x| \in L_{\mathcal{F}}^{\Phi}\right\}.$$ Thus $\left(L_{\mathcal{F}}^{\Phi}(\mathcal{E}),|||\cdot|||_{\Phi L}\right)$ is an $RN$ module.
\end{remark}

\begin{corollary}
Assume that $\Phi$ and $\Psi$ satisfy the $\Delta_2$-condition, then we obtain  $L_{\mathcal{F}}^{\Phi}(E)=H_{\mathcal{F}}^{\Phi}(E)$ and $L_{\mathcal{F}}^{\Psi}\left(E^*\right)=H_{\mathcal{F}}^{\Psi}\left(E^*\right)$. Using Theorem $4.1$, we have
$$
\left(L_{\mathcal{F}}^{\Phi}(E),\||\cdot\||_{\Phi L}\right)^{*} \cong\left(L_{\mathcal{F}}^{\Psi}\left(E^*\right),\||\cdot\||_{\Psi O}\right).
$$
\end{corollary}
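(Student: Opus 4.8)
The plan is to prove the two module identities first and then obtain the isomorphism by a direct substitution into Theorem 4.1. One inclusion is free of charge: by the very definitions, $H_{\mathcal{F}}^{\Phi}(E)\subseteq L_{\mathcal{F}}^{\Phi}(E)$, since demanding $E[\Phi(d\|x\|)\mid\mathcal{F}]\in L^0(\mathcal{F})$ for \emph{all} $d\in L_{++}^0(\mathcal{F})$ is stronger than demanding it for \emph{some} such $d$. Hence all the content of the first assertion sits in the reverse inclusion $L_{\mathcal{F}}^{\Phi}(E)\subseteq H_{\mathcal{F}}^{\Phi}(E)$, and this is precisely where the $\Delta_2$-condition must be used.

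To handle the reverse inclusion I would first iterate the $\Delta_2$-inequality. Writing $\zeta_0\in L_{++}^0(\mathcal{F})$ for the constant with $\Phi(2t)\le \zeta_0\Phi(t)$ for all $t\in L_+^0(\mathcal{F})$, a straightforward induction gives $\Phi(2^n t)\le \zeta_0^n\Phi(t)$ for every $n\in\mathbb{N}$. Now fix $x\in L_{\mathcal{F}}^{\Phi}(E)$, so that there is some $d_0\in L_{++}^0(\mathcal{F})$ with $E[\Phi(d_0\|x\|)\mid\mathcal{F}]\in L^0(\mathcal{F})$, and let $d\in L_{++}^0(\mathcal{F})$ be arbitrary. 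The goal is to show $E[\Phi(d\|x\|)\mid\mathcal{F}]\in L^0(\mathcal{F})$.

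The key maneuver, and the step I expect to be the main obstacle, is to replace the classical argument ``choose $n$ so large that $d\le 2^n d_0$'' — which is unavailable here because $d$ and $d_0$ are random elements of $L_{++}^0(\mathcal{F})$ — by a countable partition argument along $\mathcal{F}$-measurable sets. Since $d/d_0\in L_{++}^0(\mathcal{F})$ is strictly positive and finite $P$-a.s., the sets $B_n:=\{\omega: 2^{n-1}d_0\le d<2^n d_0\}$ for $n\ge 1$ together with $B_0:=\{\omega: d<d_0\}$ form a countable $\mathcal{F}$-measurable partition of $\Omega$ up to a $P$-null set. On $B_n$ the monotonicity of $\Phi$ combined with the iterated inequality yields $\Phi(d\|x\|)\le \Phi(2^n d_0\|x\|)\le \zeta_0^n\Phi(d_0\|x\|)$; pulling the $\mathcal{F}$-measurable factor $\zeta_0^n$ out of the conditional expectation then gives $\tilde{I}_{B_n}E[\Phi(d\|x\|)\mid\mathcal{F}]\le \tilde{I}_{B_n}\zeta_0^n E[\Phi(d_0\|x\|)\mid\mathcal{F}]$, which is a finite element of $L^0(\mathcal{F})$ on $B_n$. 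Because the $B_n$ cover $\Omega$, the quantity $E[\Phi(d\|x\|)\mid\mathcal{F}]$ is finite $P$-a.s., so $x\in H_{\mathcal{F}}^{\Phi}(E)$. This establishes $L_{\mathcal{F}}^{\Phi}(E)=H_{\mathcal{F}}^{\Phi}(E)$, and running the identical argument with $\Psi$ in place of $\Phi$ and $E^*$ in place of $E$ gives $L_{\mathcal{F}}^{\Psi}(E^*)=H_{\mathcal{F}}^{\Psi}(E^*)$.

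Finally I would substitute. Since $L_{\mathcal{F}}^{\Phi}(E)$ and $H_{\mathcal{F}}^{\Phi}(E)$ coincide as $L^0(\mathcal{F},K)$-modules and carry exactly the same norm $|||\cdot|||_{\Phi L}$, their random conjugate spaces are identical, so Theorem 4.1 may be read verbatim with $H_{\mathcal{F}}^{\Phi}(E)$ replaced by $L_{\mathcal{F}}^{\Phi}(E)$, yielding $(L_{\mathcal{F}}^{\Phi}(E),|||\cdot|||_{\Phi L})^{*}\cong (L_{\mathcal{F}}^{\Psi}(E^*),|||\cdot|||_{\Psi O})$, which is the assertion. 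Note that only the $\Phi$-side equality is strictly needed for the left-hand factor of the isomorphism, the $\Psi$-side equality merely recording that the dual is itself a conditional Orlicz heart; I anticipate no difficulty in this closing step, as all the real work is contained in the random $\Delta_2$ iteration and the partition argument.
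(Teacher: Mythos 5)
Your proposal is correct and follows essentially the same route as the paper: the paper states the corollary with no further argument beyond noting that the $\Delta_2$-condition forces $L_{\mathcal{F}}^{\Phi}(E)=H_{\mathcal{F}}^{\Phi}(E)$ and $L_{\mathcal{F}}^{\Psi}(E^*)=H_{\mathcal{F}}^{\Psi}(E^*)$ and then substituting into Theorem 4.1, which is precisely your closing step. Your iterated inequality $\Phi(2^n t)\le \zeta_0^n\Phi(t)$ together with the countable $\mathcal{F}$-measurable partition $B_n=\{2^{n-1}d_0\le d<2^n d_0\}$ (legitimate since $\zeta_0$ and the indicators $\tilde{I}_{B_n}$ are $\mathcal{F}$-measurable and so pull out of $E[\,\cdot\,|\mathcal{F}]$, with the localization $\tilde{I}_{B_n}\Phi(t)=\tilde{I}_{B_n}\Phi(\tilde{I}_{B_n}t)$ following from $L^0(\mathcal{F})$-convexity and $\Phi(0)=0$) simply supplies the standard detail the paper leaves implicit.
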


\section*{Acknowledgment}
The study was supported by the National Natural Science Foundation of China (Grant No. 12171361) and the Humanities and Social Science Foundation of Ministry of Education (Grant No. 20YJC790174).

\end{document}